\newtheorem{lem}{Lemma}
\newtheorem{lemma}[lem]{Lemma}
\newtheorem{prop}{Proposition}
\newtheorem{proposition}[prop]{Proposition}
\newtheorem{thm}{Theorem}
\newtheorem{theorem}[thm]{Theorem}
\newtheorem*{Main}{Main Theorem}
\newtheorem{cor}{Corollary}
\newtheorem{corollary}[cor]{Corollary}
\newtheorem{defi}{Definition}
\newtheorem{definition}[defi]{Definition}
\theoremstyle{remark}
\newtheorem*{remark}{Remark}
\newtheorem*{Remarks}{\bf Remarks}
\def\\{\cr}
\def\({\left(}
\def\){\right)}
\def\<{\langle}
\def\>{\rangle}
\def\func#1{\mathop{\rm #1}}%
\newcommand{\vmu}{{\mu}}
\newcommand{\lmu}{\ell ({\mu})}
\newcommand{\smu}{\vert {\mu}\vert}
\newcommand{\vla}{{\lambda}}
\newcommand{\lla}{\ell ({\lambda})}
\def\mathscr{\mathcal}       
\begin{document}
\title[Formulas Coefficients]
{Formulas for coefficients of polynomials assigned to arithmetic functions}
\author{Bernhard Heim }
\address{Faculty of Mathematics, Computer Science, and Natural Sciences,
RWTH Aachen University, 52056 Aachen, Germany}
\email{bernhard.heim@rwth-aachen.de}
\author{Markus Neuhauser}
\address{Kutaisi International University (KIU), Youth Avenue, Turn 5/7 Kutaisi, 4600 Georgia}
\email{markus.neuhauser@kiu.edu.ge}
\subjclass[2010]{Primary 05A10, 11B83, 11P84; Secondary  11F20, 33C45}
\keywords{Arithmetic functions, Dedekind eta-function, partitions, polynomials}
\pagenumbering{arabic}

\begin{abstract}
We attach to normalized (non-vanishing)  arithmetic
functions $g$ and $h$ recursively defined polynomials.
Let $P_0^{g,h}(x):=1$. Then
\begin{equation}
P_n^{g,h}(x) := \frac{x}{h(n)} \sum_{k=1}^{n} g(k) \, P_{n-k}^{g,h}(x).
\end{equation}
For special $g$ and $h$, we obtain the D'Arcais polynomials, which are equal to
the coefficients of the $(-x)$th powers of the Dedekind $\eta$-function and are also
given by Nekrasov and Okounkov as a hook length formula. Examples are offered by
Pochhammer polynomials,
Chebyshev polynomials of the second kind, and
associated Laguerre polynomials. We present explicit formulas and identities for the coefficients of $P_n^{g,h}(x)$
which separate the impact of $g$ and $h$. Finally, we provide several applications.
\end{abstract}
\maketitle
\newpage
\section{Introduction}
The investigation of formulas and properties for the $q$-expansion of $r$th powers of Euler products
$\prod_{n=1}^{\infty }(1-q^n)^{r}$
goes back to Euler 1748 ($r=\pm 1$), Jacobi 1828 ($r=3$), 
and Ramanujan 1916 ($r=24$) \cite{Ra16}. 
This involves pentagonal numbers, partition numbers, triangle numbers,
and the Ramanujan tau-function. 
We refer to \cite{Ka78} for a historical introduction including Macdonald identities.

The topic interrelates several fields in mathematics, for example, combinatorics \cite{AE04, Wi06}, 
modular forms and number theory \cite{Se85}, representation theory of Lie algebras \cite{Ko04,We06}, and statistical mechanics \cite{NO06}.

Modular forms represented by powers of the Dedekind $\eta$-function have
captivating properties. Some of them are still conjectural, like the Lehmer conjecture
\cite{Le47} on the non-vanishing of the coefficients of discriminant function $\Delta$.
We refer to Balakrishnan, Craig and Ono \cite{BKO20} for recent results.
Let 
$$\eta(\tau):= q^{\frac{1}{24}} \prod_{n=1}^{\infty} ( 1 - q^n),$$ where $q:= e^{2 \pi i \tau}$, $\func{Im}(\tau)>0$.
Varying $r$, Newman \cite{Ne55} and Serre \cite{Se85} investigated the 
$q$-expansion
\begin{equation}\label{Newman}
\sum_{n=0}^{\infty} p_r(n) \, q^n = \prod_{n=1}^{\infty} ( 1 - q^n)^r
\end{equation} 
and obtained remarkable results. The starting point is the fact that the coefficients are polynomials in $r$.
Newman utilized (\cite{Ne55}, formula (3)), a recursion formula for the coefficients of the polynomials 
\begin{eqnarray}
p_r(n) & = & \frac{(-1)^n}{n!} \sum_{k=0}^{n-1} (-1)^k A_k(n) \, r^{n-k},\\
A_k(n) & = & \sum_{s=1}^k s! \,  \sigma(s+1)  \, \,\sum_{\lambda=1}^s \binom{\lambda -1}{s} \, A_{k-s}(\lambda-1 -s)
\end{eqnarray}
and recorded the first eleven polynomials. Recall that $\sigma(n) := \sum_{d \mid n} d$.

Let $r$ be an even, positive integer.
Serre proved that $\eta^r$ is lacunary iff $r \in \{2,4,6,8,10,14,26\}$.
Serre reduced his proof to specific properties of the first eleven polynomials (\cite{Se85}, Lemme 3). 
These polynomials belong to an interesting type of recursively defined polynomials $P_n^{g,h}(x)$ assigned
to normalized arithmetic functions $g$ and $h$. Let $h$ be non-vanishing.
Let $P_0^{g,h}(x):=1$ and 
\begin{eqnarray} 
P_n^{g,h}(x) & = &\frac{x}{h(n)} \sum_{k=1}^n g(k) \, P_{n-k}^{g,h}(x), \label{recursion}\\
P_n^{g,h}(x) & = & \frac{1}{\prod_{k=1}^n      h(k)}   \left( A_{n,n}^{g,h} x^n + \ldots + A_{n,1}^{g,h} x \right) .
\end{eqnarray}
Let $\sigma_{\ell }(n):= \sum_{d \mid n} d^{\ell }$, $\func{id}(n)=n$ and $1(n)=1$. Then $\sigma(n)= \sigma_1(n)$. We recover
Newman's approach (\ref{Newman}):
\begin{equation}
\sum_{n=0}^{\infty} P_n^{\sigma, \func{id}}(z)\,  q^n   =  \left( 1 - q^n \right)^{-z}, \qquad ( z \in \mathbb{C}).
\end{equation}
for polynomials with $(g,h)= ( \sigma, \func{id})$.  
The polynomials of degree $n \leq 5$ first appeared in
work by Francesco D'Arcais \cite{DA13}. They are called D'Arcais polynomials \cite{We06, HN20A}.

The Lehmer conjecture translates to $P_n^{\sigma, \func{id}}(-24) \neq 0$ for all $n \in \mathbb{N}$.
Let
$E_4(\tau) = 1 + 240\sum_{n=1}^{\infty} \sigma_3(n) \, q^n$ and
$E_6(\tau) = 1 - 504 \sum_{n=1}^{\infty} \sigma_5(n) \, q^n$
be the Eisenstein series of weight $4$ and $6$. They generate the algebra of modular forms for the full modular group \cite{On03}. 
Let $a_4(n)$ and $a_6(n)$ be 
the coefficients of the $q$ expansion of $1/E_4$ and $1/E_6$. Then \cite{HN20B}
\begin{eqnarray*}
a_4(n) &=& P_n^{\sigma_3, 1}(-240), \\
a_6(n)  &= &  P_n^{\sigma_5, 1}(504).
\end{eqnarray*}
In 2003, Nekrasov and Okounkov \cite{NO03} discovered a remarkable hook length formula,
displaying the $n$th
D'Arcais polynomial as a sum over all partitions $\lambda$ of $n$, ($\lambda \vdash n$),
of products involving the multiset $\mathcal{H}(\lambda)$
of hook lengths associated to $\lambda$. 
Nekrasov and Okounkov's result (finally published in 2006 \cite{NO06}) 
is based on random partitions and the Seiberg--Witten theory:
\begin{equation}\label{NO}
P_n^{\sigma,\func{id}}(z) = \sum_{\lambda \vdash n} \,\, \prod_{ h \in \mathcal{H}(\lambda)} \left( 1 + \frac{z+1}{h^2} \right).
\end{equation}
Shortly after their discovery, Westbury \cite{We06} and Han \cite{Ha10} spotted and proved the formula in connection with
Macdonald identities.
\newline

In this paper we present a formula for the coefficients $A_{n,m}^{g,h}$ of the
polynomials $P_n^{g,h}(x)$ attached to normalized arithmetic functions $g$ and $h$, where $h$ is non-vanishing.
The main goal is to identify the impact of $g$ and $h$ separately. The formula makes it possible to
study the polynomials and coefficients by varying $g$ and $h$. 
We expect to obtain an approximation of the coefficients with this method of the D'Arcais polynomials in the near future. 
\newline

\begin{Main}
Let $g$ and $h$ be normalized arithmetic functions. Let $h$ be non-vanishing.
Let $1 \leq m < n$. Let $\mu=(\mu_1, \ldots, \mu_r)$ be a partition of $n-m$.
Let $\mathcal{G}(\mu) := \prod_{k=1}^r g(\mu_k +1)$ and $\mathcal{H}(\mu,n)$ as 
provided by Definition \ref{def: Hmun} and Definition \ref{Hcal}.
Both functions depend on $\mu$. Additionally, 
$\mathcal{G}$ depends on $g$ and $\mathcal{H}(\mu,n)$ depends on $h$ and $n$.
Then we have
\begin{equation}\label{TOP}
A_{n,m}^{g,h} =   \sum_{\mu \vdash n-m}  \mathcal{G}(\mu) \cdot \mathcal{H}(\mu,n).
\end{equation}
\end{Main}

Let a partition 
$\mu=(\mu_1, \ldots, \mu_r)$ of $n$ be given. We denote by $\ell (\mu)=r$ the
length of $\mu$ and $\vert \mu \vert =n$ the size of $\mu$.
We denote $\mathfrak{m}_j = \mathfrak{m}_j(\mu)$,
the multiplicity of $j$ in the partition $\mu$.
The symmetric group $S_r$ acts
on the set of all compositions of $n$ of length $r$. Let $\func{Orb}\,(\mu)$ be
the orbit of $\mu$ \cite{St99}.
\begin{theorem}\label{th1}
Let $g$ be a normalized arithmetic function. Let $A_{n,m}^{g,1}$ be the
$m$th coefficient of $P_n^{g,1}(x)$. Let $1 \leq m <n$. Then
\begin{equation}
A_{n,m}^{g,1} =   \sum_{\mu \vdash n-m}  \mathcal{G}(\mu) \cdot 
\binom{\lmu}{\mathfrak{m}_1 , \ldots , \mathfrak{m}_{m}} \,\, \binom{n - \smu}{\lmu}.
\end{equation}
\end{theorem}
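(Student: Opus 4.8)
The plan is to solve the recursion \eqref{recursion} explicitly in the case $h\equiv 1$ and then reorganize the answer by partitions. Since $h(k)=1$ for every $k$, the recursion reads $P_n^{g,1}(x)=x\sum_{k=1}^n g(k)\,P_{n-k}^{g,1}(x)$ with $P_0^{g,1}=1$. First I would prove, by induction on $n$, the closed form
\[
P_n^{g,1}(x) \;=\; \sum_{\ell=1}^n x^\ell \sum_{\substack{k_1+\cdots+k_\ell=n\\ k_1,\dots,k_\ell\ge 1}} \prod_{i=1}^\ell g(k_i),
\]
where the inner sum ranges over all compositions of $n$ into exactly $\ell$ positive parts. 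The induction step is immediate: the factor $x\,g(k)$ prepends the part $k$ to each composition of $n-k$, and letting $k$ run over $1,\dots,n$ lists every composition of $n$ exactly once. Because $\prod_{k=1}^n h(k)=1$, comparison with $P_n^{g,1}(x)=\sum_{m=1}^n A_{n,m}^{g,1}x^m$ then gives the intermediate identity
\[
A_{n,m}^{g,1} \;=\; \sum_{\substack{k_1+\cdots+k_m=n\\ k_1,\dots,k_m\ge 1}} \prod_{i=1}^m g(k_i).
\]

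Next I would pass from compositions to partitions. Substituting $k_i=j_i+1$ with $j_i\ge 0$ turns the constraint into $j_1+\cdots+j_m=n-m$, and I would record the sub-multiset of the strictly positive $j_i$ as a partition $\mu=(\mu_1,\dots,\mu_r)\vdash n-m$ with $r=\lmu$ parts. Here the hypothesis that $g$ is normalized is essential: every vanishing entry $j_i=0$ contributes the factor $g(1)=1$, so the product collapses to $\prod_{k=1}^r g(\mu_k+1)=\mathcal{G}(\mu)$, the $m-r$ ones leaving no trace. Grouping the compositions by the partition $\mu$ they produce yields
\[
A_{n,m}^{g,1} \;=\; \sum_{\mu\vdash n-m} \mathcal{G}(\mu)\, N(\mu),
\]
where $N(\mu)$ counts the length-$m$ compositions of $n$ that reduce to $\mu$, equivalently the arrangements of the $r$ prescribed positive values together with $m-r$ zeros in $m$ ordered slots.

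The combinatorial heart, and the only place that needs care, is the evaluation of $N(\mu)$. I would count in two independent stages: first choose which $r$ of the $m$ positions receive the positive entries, contributing $\binom{m}{r}=\binom{n-\smu}{\lmu}$ (using $\smu=n-m$); then distribute the parts of $\mu$ among those $r$ slots, which — since parts of equal size are indistinguishable — gives the number of distinct reorderings of the multiset of parts, that is, the orbit size $|\func{Orb}(\mu)|=\binom{\lmu}{\mathfrak{m}_1,\dots,\mathfrak{m}_m}$. Multiplying,
\[
N(\mu)\;=\;\binom{n-\smu}{\lmu}\,\binom{\lmu}{\mathfrak{m}_1,\dots,\mathfrak{m}_m}\;=\;\frac{m!}{(m-\lmu)!\,\prod_{j}\mathfrak{m}_j!},
\]
and inserting this into the previous display is exactly the asserted identity. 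I expect the subtlety to lie entirely in keeping the multiplicities $\mathfrak{m}_j$ consistent between the composition picture and the multinomial; a useful guard is that $\binom{n-\smu}{\lmu}=\binom{m}{r}$ vanishes as soon as $r>m$, automatically discarding partitions of $n-m$ with more than $m$ parts, and small cases (e.g.\ $n=5$, $m=2$) confirm the bookkeeping.
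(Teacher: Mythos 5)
Your proof is correct, but it takes a genuinely different route from the paper's. The paper deduces Theorem \ref{th1} from its Main Theorem, which holds for arbitrary normalized non-vanishing $h$ and is proved by induction on $n$ using the coefficient recursion of Lemma \ref{firststep}; this expresses $A_{n,m}^{g,h}=\sum_{\mu\vdash n-m}\mathcal{G}(\mu)\,\mathcal{H}(\mu,n)$ in terms of the recursively defined function $H(\mu,n)$, and the case $h=1$ is then finished by the closed formula $H(\mu,n)=\binom{n-\smu}{\lmu}$ (an induction on $\ell(\mu)$ via the hockey-stick identity) together with an orbit--stabilizer computation giving $\mathcal{H}(\mu,n)=\left|\func{Orb}(\mu)\right| H(\mu,n)$. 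You bypass all of this machinery: for $h=1$ the recursion (\ref{recursion}) telescopes into the closed form $P_n^{g,1}(x)=\sum_{\ell}x^{\ell}\sum_{k_1+\cdots+k_{\ell}=n}\prod_{i}g(k_i)$, so $A_{n,m}^{g,1}$ is a sum over compositions of $n$ into $m$ positive parts --- an identity the paper records only later (Corollary \ref{general}) and derives there from the conversion formula rather than from the recursion --- and you then regroup compositions by the partition $\mu\vdash n-m$ formed by the parts exceeding $1$ (using $g(1)=1$ to absorb the rest) and count the fiber over $\mu$ as $\binom{m}{\lmu}\cdot\lmu!/\prod_{j}\mathfrak{m}_{j}!$. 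What your argument buys is a self-contained, purely combinatorial proof of this theorem; what it gives up is generality: the telescoping closed form is special to $h=1$, whereas the paper's $H(\mu,n)$ formalism yields Theorem \ref{th2} and the case of arbitrary $h$ from one and the same induction. On the bookkeeping you flag: your count is the right one precisely when the multiplicities $\mathfrak{m}_{j}$ range over \emph{all} part sizes $j$ of $\mu$, i.e.\ $j\leq n-m$; the index range $\mathfrak{m}_1,\ldots,\mathfrak{m}_{m}$ in the theorem as printed is a slip carried over from the paper's proposition for $\mathcal{H}$ (where $\mu$ is a partition of $m$, so parts cannot exceed $m$), and taken literally it would miscount, e.g., the term $\mu=(3,3)$ for $n=8$, $m=2$, whose orbit has size $1$, not $\binom{2}{0,0}=2$; so your reading is the correct interpretation of the statement.
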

Let $1 \leq m <n$.
The coefficients for $n-m=1,2,3$ are given by
\begin{itemize}
\item  $A_{n,n-1}^{g,1}  =g\left( 2\right) \left( n-1\right) $,
\item  $A_{n,n-2}^{g,1} =\left( g\left( 2\right) \right) ^{2}\binom{n-2}{2}+g\left( 3\right) \left( n-2\right) $,
\item  $A_{n,n-3}^{g,1} =\left( g\left( 2\right) \right) ^{3}\binom{n-3}{3}+2
g\left( 2\right)
g\left( 3\right) \binom{n-3}{2}
+g\left( 4\right)
\left( n-3\right) $.
\end{itemize}

\begin{theorem}\label{th2}Let $g$ be a normalized arithmetic function. Let $A_{n,m}^{g,\func{id}}$ be the
$m$th coefficient of $P_n^{g,\func{id}}(x)$ for $1 \leq m <n$.
Then 
\begin{equation}
A_{n,m}^{g,\func{id}} =  \sum_{\mu \vdash n-m}  \mathcal{G}(\mu) \,\, 
\prod_{k=0}^{\smu + \lmu -1} (n-k) \,\,
\sum_{{\lambda} \in \func{Orb}\,(\vmu)} 
\prod_{k=1}^{\lla} \left( k + \sum_{i=1}^k \lambda_i \right)^{-1}.
\end{equation}
\end{theorem}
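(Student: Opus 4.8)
The plan is to reduce Theorem~\ref{th2} to a single classical summation identity by unfolding the recursion. Since $h=\func{id}$ we have $\prod_{k=1}^{n}h(k)=n!$, so $A_{n,m}^{g,\func{id}}=n!\,c_{n,m}$, where $c_{n,m}$ denotes the coefficient of $x^{m}$ in $P_{n}^{g,\func{id}}(x)$. The recursion (\ref{recursion}) reads $c_{n,m}=\frac{1}{n}\sum_{k=1}^{n}g(k)\,c_{n-k,m-1}$, with $c_{0,0}=1$ and $c_{N,0}=0$ for $N\ge 1$. Iterating it $m$ times and using the vanishing of $c_{N,0}$ for $N\ge 1$ to force $k_{1}+\dots +k_{m}=n$, I would obtain
\[
A_{n,m}^{g,\func{id}}=n!\sum_{k_{1}+\dots +k_{m}=n,\ k_{j}\ge 1}\ \frac{\prod_{j=1}^{m}g(k_{j})}{\prod_{j=1}^{m}\bigl(n-(k_{1}+\dots +k_{j-1})\bigr)}.
\]
The decisive observation is that, under the constraint $\sum_{j}k_{j}=n$, the $j$-th denominator is exactly the suffix sum $T_{j}:=k_{j}+k_{j+1}+\dots +k_{m}$.

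Next I would substitute $a_{j}:=k_{j}-1\ge 0$ and sort the compositions according to the partition $\mu\vdash n-m$ formed by the nonzero $a_{j}$. Because $g$ is normalized, every $k_{j}=1$ contributes a factor $g(1)=1$, so $\prod_{j}g(k_{j})=\prod_{i=1}^{\lmu}g(\mu_{i}+1)=\mathcal{G}(\mu)$ is constant on each class. This gives
\[
A_{n,m}^{g,\func{id}}=n!\sum_{\mu\vdash n-m}\mathcal{G}(\mu)\ \sum_{k}\ \frac{1}{\prod_{j=1}^{m}T_{j}},
\]
where the inner sum runs over all compositions $k$ whose multiset of parts is $\{1^{\,m-\lmu}\}\cup\{\mu_{i}+1:1\le i\le \lmu\}$, equivalently over all arrangements of that multiset.

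The heart of the matter is the classical identity that, for positive reals $x_{1},\dots ,x_{N}$,
\[
\sum_{\sigma\in S_{N}}\ \prod_{k=1}^{N}\frac{1}{x_{\sigma(1)}+\dots +x_{\sigma(k)}}=\frac{1}{x_{1}\cdots x_{N}},
\]
together with its suffix-sum form, obtained by reversing each permutation. I would prove it by induction on $N$: the $k=N$ factor is always $1/(x_{1}+\dots +x_{N})$, and conditioning on the last letter $\sigma(N)=i$ reduces the remaining product to the $(N-1)$-variable case for $\{x_{l}:l\ne i\}$, whence the sum collapses through $\sum_{i}x_{i}\big/\prod_{l\ne i}x_{l}=(x_{1}+\dots +x_{N})\big/\prod_{l}x_{l}$. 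Applying the suffix-sum form with $N=m$ to the multiset above, and observing that each distinct composition arises from exactly $(m-\lmu)!\prod_{j}\mathfrak{m}_{j}!$ permutations $\sigma$ (those permuting equal parts among themselves), the inner sum evaluates to $\bigl[(m-\lmu)!\prod_{j}\mathfrak{m}_{j}!\prod_{i}(\mu_{i}+1)\bigr]^{-1}$.

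Finally I would match this closed form to the orbit sum in the statement by invoking the same identity once more, in prefix form with $N=\lmu$ and $x_{i}=\mu_{i}+1$. Since $k+\sum_{i=1}^{k}\lambda_{i}=\sum_{i=1}^{k}(\lambda_{i}+1)$, grouping $S_{\lmu}$ into orbits gives
\[
\sum_{\lambda\in\func{Orb}(\mu)}\ \prod_{k=1}^{\lla}\Bigl(k+\sum_{i=1}^{k}\lambda_{i}\Bigr)^{-1}=\frac{1}{\prod_{j}\mathfrak{m}_{j}!\ \prod_{i}(\mu_{i}+1)}.
\]
Combining this with $\prod_{k=0}^{\smu+\lmu-1}(n-k)=n!/(m-\lmu)!$ then reproduces (\ref{TOP}) for $h=\func{id}$ exactly. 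I expect the main obstacle to be the bookkeeping of the third step: correctly passing from the labeled $S_{m}$-sum to distinct compositions via the stabilizer factor, keeping the suffix- and prefix-sum conventions consistent across the two applications of the identity, and verifying the boundary cases $m-\lmu=0$ and the empty partition. The summation identity itself is routine once it has been isolated.
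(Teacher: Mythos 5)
Your proof is correct, but it takes a genuinely different route from the paper's. The paper obtains Theorem~\ref{th2} by specializing its Main Theorem---proved by induction on $n$ from the coefficient recursion of Lemma~\ref{firststep}---and combining it with the closed-form evaluation~(\ref{zwei: H}) of the auxiliary function $H(\mu,n)$ for $h=\func{id}$ (itself only sketched there, by induction on $\ell(\mu)$), together with the definition of $\mathcal{H}(\mu,n)$ as an orbit sum. You bypass the Main Theorem and the function $H(\mu,n)$ entirely: you unfold the recursion into a sum over compositions $k_1+\cdots+k_m=n$ with suffix-sum denominators, and then invoke the classical identity $\sum_{\sigma\in S_N}\prod_{k=1}^{N}\bigl(x_{\sigma(1)}+\cdots+x_{\sigma(k)}\bigr)^{-1}=\bigl(x_1\cdots x_N\bigr)^{-1}$ twice: in suffix form to evaluate the sum over arrangements of the multiset $\{1^{\,m-\ell(\mu)}\}\cup\{\mu_i+1\}$, and in prefix form to recognize the orbit sum $\sum_{\lambda\in\func{Orb}(\mu)}\prod_k\bigl(k+\sum_{i\le k}\lambda_i\bigr)^{-1}$ as $\bigl(\prod_j\mathfrak{m}_j!\,\prod_i(\mu_i+1)\bigr)^{-1}$. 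The stabilizer counts $(m-\ell(\mu))!\prod_j\mathfrak{m}_j!$ and $\prod_j\mathfrak{m}_j!$ mediating between labeled permutations and distinct compositions are right, and the two closed forms match via $\prod_{k=0}^{\smu+\lmu-1}(n-k)=n!/(m-\ell(\mu))!$; note also that your unfolded formula is equivalent to the first identity of Corollary~\ref{general}, which the paper derives from the exponential generating function instead. What your route buys is a self-contained, elementary proof and a transparent explanation of why orbit sums appear at all; what the paper's route buys is generality, since the Main Theorem covers arbitrary non-vanishing $h$ and Theorem~\ref{th2} becomes a two-line specialization. One bookkeeping item to add beyond the boundary cases you flagged: partitions $\mu\vdash n-m$ with $\ell(\mu)>m$ never arise in your composition count, yet they do occur in the stated sum; they contribute $0$ there because $\prod_{k=0}^{\smu+\lmu-1}(n-k)$ then contains the factor $n-n=0$, so both sides agree for those terms as well.
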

Let $1 \leq m <n$.
The coefficients for $n-m=1,2,3$ are given by
\begin{itemize}
\item  $A_{n,n-1}^{g,\func{id}}=g\left( 2\right) \binom{n}{2}$,

\item  $A_{n,n-2}^{g,\func{id}}=   3\left( g\left( 2\right) \right) ^{2}\binom{n}{4}+2g\left( 3\right) \binom{n}{3}$,

\item  $A_{n,n-3}^{g,\func{id}}=
15\left( g\left( 2\right) \right) ^{3}\binom{n}{6}+20g\left( 2\right) g\left( 3\right) \binom{n}{5}+6g\left( 4\right) \binom{n}{4}$.
\end{itemize}
\ \newline

Next, let us consider the most simple arithmetic function $g=1$, which involves the Pochhammer polynomials.
We have $P_n^{1,1}(x) = x (x+1)^{n-1}$ and $P_n^{1, \func{id}}(x) = \frac{1}{n!} \prod_{k=0}^{n-1} (x+ k)$. This follows directly from
Definition (\ref{recursion}). Thus,
\begin{eqnarray}
A_{n,m}^{1,1} & = & \binom{n-1}{m-1},\\
A_{n,m}^{1, \func{id}} & = & \vert s(n,m) \vert,
\end{eqnarray}
where $s(n,m)$ is the Stirling number of the first kind.
Note
that Theorem \ref{th1} and Theorem \ref{th2} already provide
non-trivial identities. This example is already quite interesting, since it shows the impact of $h$ on the root distribution for $g=1$.
Let $h$ be a normalized and non-vanishing arithmetic function. Throughout the paper we put $h(0):=0$.
Then
\begin{equation}
P_n^{1,h}(x) = \frac{1}{\prod_{k=1}^nh\left( k\right) }  \prod_{k=0}^{n-1} ( x + h(k)).
\end{equation}

\begin{remark}
It would be beneficial to develop a deformation theory for the recursively defined polynomials $P_n^{g,h}(x)$.
This should lead to new results for the distribution of the roots, where
$h$ varies from $h=1$ to $h=\func{id}$.

Let $g=1$. For $h=1$ the set of roots is given by $0$ and by $-1$ with multiplicity $n-1$. This set should {\it move towards}
the set of simple roots given by $0,1, \ldots, n-1$ for $h=\func{id}$.
The coefficients of the involved polynomials can be described in terms of elementary symmetric polynomials.
\end{remark}

Recently \cite{HN20C}, we discovered a conversion formula between the
$m$th coefficients of the $n$th polynomials assigned to 
$g$ and to $\tilde{g}(n) = g(n)/n$ for $h=1$ and $h=\func{id}$. 

Let $g$ be of moderate growth, i.~e.\ the generating series of
$g$ is regular at $0$. Let $\mathcal{G}$ be the function attached to 
$g$ and $\widetilde{\mathcal{G}}$ be the function attached to $\tilde{g}$.
Then
\begin{equation}\label{converting}
\frac{A_{n,m}^{g,\func{id}}}{n!} = \frac{A_{n,m}^{\tilde{g},1}}{m!}.
\end{equation}
This implies
\begin{corollary}
Let $g$ be a normalized arithmetic function of moderate growth. 
Let $1 \leq m < n$. Let $\mathcal{G}$ be assigned to $g$ and $\widetilde{\mathcal{G}}$ assigned to $\tilde{g}$:
\begin{eqnarray*}
\frac{1}{n!}
\sum_{\mu \vdash n-m}  \mathcal{{G}}(\mu) \,\, \prod_{k=0}^{\smu + \lmu -1} (n-k) \,\,
\sum_{{\lambda} \in \func{Orb}\,(\vmu)} 
\prod_{k=1}^{\lla} \left( k + \sum_{i=1}^k \lambda_i \right)^{-1} & & \\
=\frac{1}{m!}
\sum_{\mu \vdash n-m}  \widetilde{\mathcal{G}}(\mu) \cdot 
\binom{\lmu}{\mathfrak{m}_1 , \ldots , \mathfrak{m}_{m}} \,\, \binom{n - \smu}{\lmu} \phantom{xxxxxxx}
\end{eqnarray*}
where $\mathfrak{m}_{j}$ is the
multiplicity of $j$ in $\mu $.
\end{corollary}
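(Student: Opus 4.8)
The plan is to recognize the two sides of the claimed identity as the two halves of the conversion formula (\ref{converting}), after substituting into each the explicit coefficient expression supplied by Theorem \ref{th2} and Theorem \ref{th1}. Under the moderate-growth hypothesis on $g$, formula (\ref{converting}) states $A_{n,m}^{g,\func{id}}/n! = A_{n,m}^{\tilde g,1}/m!$ for $1 \leq m < n$, so the whole task reduces to expressing each coefficient in closed form.

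For the left-hand side I would apply Theorem \ref{th2} directly: it writes $A_{n,m}^{g,\func{id}} = \sum_{\mu \vdash n-m}\mathcal{G}(\mu)\prod_{k=0}^{\smu+\lmu-1}(n-k)\sum_{\vla\in\func{Orb}(\vmu)}\prod_{k=1}^{\lla}(k+\sum_{i=1}^{k}\lambda_i)^{-1}$, and dividing by $n!$ produces the top line of the corollary verbatim. For the right-hand side I would apply Theorem \ref{th1} with $g$ replaced by $\tilde g$. Before doing so I must check that Theorem \ref{th1} is legitimately applicable: since $g$ is normalized, $\tilde g(1) = g(1)/1 = 1$, so $\tilde g$ is again a normalized arithmetic function. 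By the definition of the function attached to an arithmetic function, the object attached to $\tilde g$ is $\widetilde{\mathcal G}(\mu) = \prod_{k=1}^{r}\tilde g(\mu_k+1)$, and Theorem \ref{th1} then yields $A_{n,m}^{\tilde g,1} = \sum_{\mu\vdash n-m}\widetilde{\mathcal G}(\mu)\binom{\lmu}{\mathfrak m_1,\dots,\mathfrak m_m}\binom{n-\smu}{\lmu}$. Dividing by $m!$ reproduces the bottom line.

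Equating the two expressions through (\ref{converting}) completes the argument. There is no genuine analytic obstacle, since the corollary is a formal consequence of three facts already in hand; the only points requiring care are the verification that $\tilde g$ inherits normalization from $g$ (so that Theorem \ref{th1} may be invoked at all) and the bookkeeping of conventions — the summation range $\mu \vdash n-m$, the multiplicities $\mathfrak m_j$ of the parts of $\mu$, and the precise definition of $\widetilde{\mathcal G}$ — so that each substituted formula matches the displayed lines of the corollary term by term.
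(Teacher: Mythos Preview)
Your proposal is correct and matches the paper's approach exactly: the corollary is presented immediately after the conversion formula (\ref{converting}) with the phrase ``This implies,'' indicating that it follows by substituting the explicit expressions from Theorem~\ref{th2} and Theorem~\ref{th1} (the latter applied to $\tilde g$) into the two sides of (\ref{converting}). Your additional care in verifying that $\tilde g$ is normalized is a welcome clarification the paper leaves implicit.
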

\section{Deformation of the coefficients of D'Arcais polynomials}
\subsection{Notation}
Let $n \in \mathbb{N}$. A composition of $n$ is an ordered sum of integers. 
It is a sequence $\vmu = (\mu_1, \ldots, \mu_r)$ of positive integers, 
which sum up to $n :=\smu $, which is denoted by the size or weight of the
partition.
We denote by $\lmu =r$, the length of the composition and $\mathcal{C}(n)$, the set of
all compositions of $n$. Next we define the subset $\mathcal{P}(n)$ of
partitions of $n$.
A partition of $n$ is any composition $\vmu$ of $n$, where the sequence $\vmu_1 \geq \ldots \geq \vmu_r$ is non-increasing.
If $r$ summands appear in a composition of $\mu$, we say
that $\mu$ has $r$ parts. The number of all compositions and partitions of $n$ is denoted by $c(n)$ and $p(n)$, respectively.
The number of compositions and partitions with $k$ parts is denoted by $c_k(n)$ and $p_k(n)$, respectively. Recall that
$c_k(n) = \binom{n-1}{k-1}$.

The symmetric group $S_r$ acts on the set of all compositions of $n$ of length
$r$ by permuting the parts: $\pi(\vmu):= (\mu_{\pi ^{-1}(1)}, \ldots, \mu_{\pi ^{-1}(r)})$.
Each orbit 
$\func{Orb}(\vmu):= \{ \pi(\vmu) \, : \, \pi \in S_{\lmu}\}$, where $\vmu \in  \mathcal{C}(n)$ is a composition of $n$,
contains exactly one partition.
Let $\vmu \in \mathcal{C}(n)$. Then 
$\mathfrak{m} _{j}:= \mathfrak{m}_{j}(\vmu) =\left| \left\{ i:\mu _{i}=j\right\} \right| $
is the multiplicity of $j$ in the composition $\vmu$ of $n$.

Let $\varepsilon$ be defined as the unique vector of length $0$. We extend our
notation by $\ell (\varepsilon)=0$ and $\vert \varepsilon \vert = 0$. 
We define $\mathcal{C}$ as
the set of all ordered partitions, where we include the partition of length $0$.

Let $\mu_1, \ldots, \mu_r$ be non-negative integers, which add up to $n$. Then we denote by
\begin{equation}\label{multi}
\binom{n}{\vmu_1 , \ldots , \vmu_r} := \frac{n!}{\vmu_1 !
\cdots \vmu_r !}
\end{equation}
the multinomial coefficient, which has  the property 
\begin{equation}
\left( x_1 + \ldots + x_r \right)^n = 
\sum_{\substack {k_1, \ldots, k_r \geq 0 \\ k_1+ \ldots + k_r=n}} 
\binom{n}{k_1 , \ldots , k_r} \,
x_1^{k_1} \cdot x
_2^{k_2} \cdots x_r^{k_r}.
\end{equation}
Consider a multiset of $n$ objects, in which $\mu_i$ objects are of type $i$. 
Then the number of ways to linearly order these objects is equal to the 
multinomial coefficients in (\ref{multi}).

\subsection{The function $\mathcal{H}(\mu,n)$}
Throughout this section $h$ will be a normalized non-vanishing function.
Special cases are given by $h(n)=\func{id}(n)=n$ or
$h(n)=1(n)=1$ for all $n \in \mathbb{N}$.
Our goal is to define a function $\mathcal{H}(\mu,n)$ for every composition $\mu$ and every number $n$.
This function will play a significant role in our main formula for the coefficients $A_{n,m}^{g,h}$.
Let $ 1 \leq m \leq n$ be integers. We put $H(n):= \prod_{k=1}^n h(k)$ and $H(0)=1$. Further, let
$h_m(n):= \frac{H(n)}{H(n-m)} = \prod_{k=0}^{m-1} h(n-k)$.

\begin{definition} \label{def: Hmun}
Let $h$ be a normalized non-vanishing arithmetic function.
Let $\mathcal{C}$ be the set of all compositions including $\varepsilon$.
We define the function $H(\mu, n)$ on $\mathcal{C} \times \mathbb{N}$ inductively by the length of $\mu$.
The initial values $H(\varepsilon, n):=1$ of length $0$ for $n\in \mathbb{N}_0$.
Let further $\vmu \in \mathcal{C}$ of length $r+1$, where $r \geq 0$.
For $ n \geq \vert \vmu \vert + \ell (\vmu)$, let 
\begin{equation}
H(\mu,n) :=
\sum_{k = \vert \vmu \vert + \ell ( \vmu)-1}^{n-1}
h_{\mu_{r+1}}(k) \,   H( (\mu_1, \ldots, \mu_r),
k - \mu_{r+1}),
\end{equation}
If $ n < \vert \vmu \vert + \ell (\vmu)$ we put $H(\vmu, n) :=0$.
\end{definition}
\ \newline

\begin{Remarks} 
a) Let the values of $h$
be positive integers. Then function $H(\mu,n)$ is an
integer-valued function and has positive values for $ n \geq \vert \vmu \vert + l(\vmu)$. It is otherwise vanishing.
\newline
b) Let $\mu \in \mathbb{N}$ and let $n \geq \mu +1$. Then
\begin{equation} \label{scalar}
H(\mu,n) =  \sum_{k=\mu}^{n-1} h_{\mu}(k).
\end{equation}
\end{Remarks}

For $h(n)=1$, or $h(n)=\func{id}(n)$, and $\mu$ a partition of length $r$,
we have a closed formula for $H(\mu,n)$.
\begin{proposition} Let $h$ be a normalized non-vanishing arithmetic function. Let $h(n)=1$ or $h(n)=n$, then we
have the following closed formulas. Let $n \in \mathbb{N}_0$ and $n \geq 1$.
Let $\mu$ be a composition of length $\ell (\mu)$ and size $\vert \mu \vert$.
Let $h(n)=1$ then
\begin{equation}\label{eins: H}
 H(\vmu,n) = \binom{n - \vert {\mu} \vert}{\ell ({\mu})}.
\end{equation}
Let $h(n)=n$ then
\begin{equation}\label{zwei: H}
H(\vmu ,n) = 
\prod_{k=0}^{\smu + \lmu -1} (n-k) \,\,
\prod_{k=1}^{\ell \left( \mu \right) } \left( k + \sum_{j=1}^k \mu _{j} \right)^{-1}.
\end{equation}
\end{proposition}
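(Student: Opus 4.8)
The plan is to establish both closed formulas simultaneously by induction on the length $\ell(\mu)$, feeding the inductive hypothesis into the recursion of Definition~\ref{def: Hmun}. The base case $\mu=\varepsilon$ (length $0$) is immediate in each case: the right-hand side of \eqref{eins: H} is $\binom{n}{0}=1$, and the two products in \eqref{zwei: H} are both empty, so their value is $1=H(\varepsilon,n)$.

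For the inductive step I write a composition of length $r+1$ as $\mu=(\mu',\mu_{r+1})$ with $\mu'=(\mu_1,\dots,\mu_r)$, and unfold the recursion as $H(\mu,n)=\sum_{k=|\mu|+r}^{n-1}h_{\mu_{r+1}}(k)\,H(\mu',k-\mu_{r+1})$, where the lower bound is $|\mu|+\ell(\mu)-1=|\mu|+r$. Using $|\mu'|=|\mu|-\mu_{r+1}$, I substitute the inductive formula for $H(\mu',\cdot)$; in both cases this renders the summand an explicit expression in $k$, and the whole problem reduces to a single summation identity. Throughout I write $k^{\underline d}:=\prod_{i=0}^{d-1}(k-i)$ for the falling factorial.

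In the case $h=1$ one has $h_{\mu_{r+1}}(k)=1$ and, by induction, $H(\mu',k-\mu_{r+1})=\binom{k-|\mu|}{r}$. Reindexing by $j=k-|\mu|$, the sum becomes $\sum_{j=r}^{\,n-1-|\mu|}\binom{j}{r}$, which the hockey-stick identity collapses to $\binom{n-|\mu|}{r+1}=\binom{n-|\mu|}{\ell(\mu)}$, as claimed in \eqref{eins: H}.

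In the case $h=\func{id}$ the factor $h_{\mu_{r+1}}(k)=\prod_{i=0}^{\mu_{r+1}-1}(k-i)$ is itself a falling factorial, as is the product $\prod_{i=0}^{|\mu'|+r-1}(k-\mu_{r+1}-i)$ coming from the hypothesis. The key observation is that these two concatenate exactly: the first runs from $k$ down to $k-\mu_{r+1}+1$ and the second continues from $k-\mu_{r+1}$ downward, so their product is the single falling factorial $k^{\underline{|\mu|+r}}=\prod_{i=0}^{|\mu|+r-1}(k-i)$. Summing via the discrete antiderivative rule $\sum_{k=a}^{b}k^{\underline d}=\tfrac{1}{d+1}\bigl[(b+1)^{\underline{d+1}}-a^{\underline{d+1}}\bigr]$ with $d=a=|\mu|+r$, the lower boundary term $a^{\underline{d+1}}$ vanishes because it contains the factor $a-d=0$, leaving $\tfrac{1}{|\mu|+r+1}\prod_{i=0}^{|\mu|+r}(n-i)$ times the constant $\prod_{i=1}^{r}\bigl(i+\sum_{j\le i}\mu_j\bigr)^{-1}$ carried over from the hypothesis. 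Since $\sum_{j=1}^{r+1}\mu_j=|\mu|$, the new factor $(r+1+|\mu|)^{-1}$ is precisely the $i=r+1$ term of the claimed product, and $|\mu|+\ell(\mu)-1=|\mu|+r$ matches the upper index of the first product, giving \eqref{zwei: H}. I expect the $h=\func{id}$ computation to be the main obstacle: one must track the index ranges carefully so that the two falling factorials concatenate without overlap or gap, and recognize that the boundary term drops out. The $h=1$ case is, in effect, the $r!$-normalized shadow of this same falling-factorial summation.
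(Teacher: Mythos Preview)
Your argument is correct and follows exactly the route the paper takes: induction on the length $\ell(\mu)$, feeding the recursion of Definition~\ref{def: Hmun} and collapsing the resulting sum (via the hockey-stick identity for $h=1$, and the falling-factorial telescoping for $h=\func{id}$). The paper in fact only writes out the $h=1$ case and dismisses the $h=\func{id}$ case with ``slightly more complicated but proven in the same way''; your treatment of the concatenation $k^{\underline{\mu_{r+1}}}\cdot (k-\mu_{r+1})^{\underline{|\mu'|+r}}=k^{\underline{|\mu|+r}}$ and the vanishing of the lower boundary term supplies precisely the details the paper omits.
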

\begin{proof} Let $h=1$ be trivial. We prove the formula (\ref{eins: H}) by induction over the length $r$. Let $n$ be arbitrary.
We start with $r=0$. Then the formula confirms that $H(\varepsilon, n)=1$.
Now let $r\geq 1$ and
(\ref{eins: H}) be true for all compositions with length smaller than $r$. Let a composition $\mu$ be given of length $r$. Then
\begin{eqnarray*}
H(\mu, n)             &=&           \sum _{k=\left| \mu \right| +\ell \left( \mu \right) -1}^{n-1}
h_{\mu _{r}}  \left( k\right) 
H((\mu _{1},\ldots ,\mu _{r -1}),
k-\mu _{r}) \\
&=& \sum _{k=\left| \mu \right| +\ell \left( \mu \right) -1}^{n-1}
\binom{k-\left| \mu \right| }{\ell \left( \mu \right) -1}=\binom{n-\left| \mu \right| }{\ell \left( \mu \right) }.
\end{eqnarray*}
The last step is given by the sum of the entries in the appropriate column in the Pascal triangle.
Formula (\ref{zwei: H}) is slightly more complicated but proven in the same way.
\end{proof}
For example, for $h \in \{ 1, \func{id} \}$ we have the following values, which illustrate some patterns and differences.
\begin{itemize}
\item
Let $h(n)=1$. \newline
Then we have $H(1,n)= n-1$, $H((1,1),n)= \binom{n-2}{2}$, $H(2,n)=n-2$, and $H((1,1,1),n)= \binom{n-3}{3}$.
Further $H((1,2),n) = H((2,1), n) = \binom{n-3}{2}$.
\item
Let $h(n)=n$. \newline
Then we have $H(1,n)= \binom{n}{2}$, $H((1,1),n) = 3 \, \binom{n}{4}$ and $H(2,n)= 2 \, \binom{n}{3}$.
Further, $H((1,2),n)= 12 \, \binom{n}{5}$ and $H((2,1), n) = 8 \, \binom{n}{5}$.
\end{itemize}

\begin{definition}\label{Hcal}
Let $h$ be a normalized non-vanishing arithmetic function. Let
$\vmu \in \mathcal{P}(m)$. Then 
\begin{equation}
\mathcal{H}(\vmu ,n) := \sum_{ \vla \in \func{Orb}\, (\vmu)} H(\vla ,n).
\end{equation}
\end{definition}

\subsection{Proof of Theorem \ref{th1} and Theorem \ref{th2}}
\ \newline
Theorem \ref{th1} follows from the Main Theorem, formula (\ref{eins: H}), and the following
proposition.
\begin{proposition}\label{formula:calH h=1}
Let $h=1$. Let $n$ and $m$ be positive integers and let $\vmu \in \mathcal{P}(m)$.
Let $\mathfrak{m}_{j}=\mathfrak{m}_{j}(\vmu)$
be the multiplicity of $j$ in the partition $\vmu$.
Then we have
\begin{equation}
\mathcal{H}(\vmu ,n) = \binom{\lmu}{\mathfrak{m}_1 , \ldots , \mathfrak{m}_{m}} \,\, \binom{n - \smu}{\lmu}.
\end{equation}
\end{proposition}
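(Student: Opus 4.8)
The plan is to combine Definition \ref{Hcal} with the closed formula (\ref{eins: H}) just established for $h=1$. By Definition \ref{Hcal}, $\mathcal{H}(\vmu,n) = \sum_{\vla \in \func{Orb}(\vmu)} H(\vla,n)$, so the task reduces to evaluating $H$ on each composition in the orbit of $\vmu$ and summing over the orbit.

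The key observation is that (\ref{eins: H}) gives $H(\vla,n) = \binom{n-\sla}{\lla}$, an expression depending on $\vla$ \emph{only} through its size $\sla$ and its length $\lla$. Every $\vla \in \func{Orb}(\vmu)$ is a rearrangement of the parts of $\vmu$, so all of them share the common size $\smu$ and common length $\lmu$. Consequently $H(\vla,n)$ takes the constant value $\binom{n-\smu}{\lmu}$ across the entire orbit, and the sum collapses to
\[
\mathcal{H}(\vmu,n) = \left| \func{Orb}(\vmu) \right| \cdot \binom{n-\smu}{\lmu}.
\]

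It then remains to count the orbit. Since $\func{Orb}(\vmu)$ is the set of \emph{distinct} compositions obtained by letting $S_{\lmu}$ permute the parts of $\vmu$, its cardinality equals the number of linear orderings of the multiset whose elements are the parts of $\vmu$. With $\lmu$ parts and multiplicities $\mathfrak{m}_1,\ldots,\mathfrak{m}_m$, this count is the multinomial coefficient $\binom{\lmu}{\mathfrak{m}_1,\ldots,\mathfrak{m}_m}$, exactly the multiset-ordering quantity recorded after (\ref{multi}). Substituting this into the displayed identity yields the assertion.

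I do not expect a genuine obstacle here: once (\ref{eins: H}) is in hand, the whole argument rests on the orbit-invariance of $H$ together with the standard count of distinct arrangements of a multiset. The only point deserving a line of care is justifying that $\left|\func{Orb}(\vmu)\right| = \binom{\lmu}{\mathfrak{m}_1,\ldots,\mathfrak{m}_m}$, which follows from the orbit--stabilizer principle applied to the $S_{\lmu}$-action defined in the notation section: the stabilizer of $\vmu$ is the subgroup permuting equal parts among themselves, of order $\prod_{j} \mathfrak{m}_j!$, so $\left|\func{Orb}(\vmu)\right| = \lmu! / \prod_j \mathfrak{m}_j!$.
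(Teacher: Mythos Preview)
Your argument is correct and follows essentially the same route as the paper: you observe that for $h=1$ the value $H(\vla,n)=\binom{n-\sla}{\lla}$ depends only on the size and length of $\vla$, hence is constant on $\func{Orb}(\vmu)$, and then you compute $|\func{Orb}(\vmu)|$ via the orbit--stabilizer theorem, identifying the stabilizer with $\prod_j S_{\mathfrak{m}_j}$ to obtain the multinomial coefficient. This matches the paper's proof step for step.
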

\begin{proof}
Let $r=\ell \left( \mu \right) $.
In the case of $h\left( n\right) =1$ for all $n$, 
$H(\pi \left( \mu \right) , n ) =H(\mu , n ) $
for every $\pi \in S_{r}$, as
$\ell \left( \pi \left( \mu \right) \right) =\ell \left( \mu \right) $
and
$\left| \pi \left( \mu \right) \right| =\left| \mu \right| $.
This implies that
$\mathcal{H}(\mu ,n) =\left| \func{Orb} \, \left( \mu \right) \right| H( \mu , n ) $.
Let
$F\left( \mu \right) =\left\{ \pi \in S_{r}:\pi \left( \mu \right) =\mu \right\} $
then
$\left| \func{Orb}\, \left( \mu \right) \right| =\frac{r!}{\left| F\left( \mu \right) \right| }$.
For $\pi \in F\left( \mu \right) $ and
$\nu =\pi \left( \mu \right) $ it must hold that
$\nu _{j}=\mu _{j}$ for all $1\leq j\leq r$ and any
$\pi $ which satisfies this property is in
$F\left( \mu \right) $.
This means that
$F\left( \mu \right) \cong \prod _{j=1}^{m}S_{\mathfrak{m}_{j}}$,
where $S_{0}$ is a trivial group consisting of only the
neutral element.
That means that
$\left| F\left( \mu \right) \right| =\prod _{j=1}^{m}\mathfrak{m}_{j}!$,
which proves the claim.
\end{proof}
Theorem \ref{th2} follows from the Main Theorem, formula (\ref{zwei: H}), and the following
proposition.
\begin{proposition}
Let $h=\func{id}$. Let $n$ and $m$ be positive integers.
Let $\vmu \in \mathcal{P}(m)$.
Then we have
\begin{equation}
\mathcal{H}\left( \vmu,n\right) = 
\prod_{k=0}^{\smu + \lmu -1} (n-k) \,\,
\sum_{{\lambda} \in \func{Orb}\,(\vmu)} 
\prod_{k=1}^{\lla} \left( k + \sum_{i=1}^k \lambda_i \right)^{-1}
\end{equation}
\end{proposition}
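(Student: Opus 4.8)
The plan is to deduce the formula directly from Definition \ref{Hcal} together with the closed expression (\ref{zwei: H}) for $H(\vla,n)$ that was just established, using nothing more than the fact that size and length are invariant under the symmetric-group action. First I would unwind Definition \ref{Hcal}, which for $\vmu \in \mathcal{P}(m)$ reads
\[
\mathcal{H}(\vmu,n) = \sum_{\vla \in \func{Orb}\,(\vmu)} H(\vla,n),
\]
the sum running over all compositions obtained by permuting the parts of $\vmu$. Since the proposition giving (\ref{zwei: H}) was stated for an arbitrary composition, and not merely for the representative partition, I may apply it to each $\vla \in \func{Orb}\,(\vmu)$ and write
\[
H(\vla,n) = \prod_{k=0}^{\sla + \lla -1}(n-k)\,\prod_{k=1}^{\lla}\left(k + \sum_{i=1}^{k}\lambda_i\right)^{-1}.
\]

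The key step is the observation that every $\vla$ in $\func{Orb}\,(\vmu)$ is a rearrangement of $\vmu$, so $\sla = \smu$ and $\lla = \lmu$ throughout the orbit. Hence the first factor $\prod_{k=0}^{\sla+\lla-1}(n-k) = \prod_{k=0}^{\smu+\lmu-1}(n-k)$ is constant on the orbit and can be pulled out of the summation, whereas the second factor genuinely varies with the ordering of the parts through the partial sums $\sum_{i=1}^{k}\lambda_i$ and must remain inside the sum. This gives exactly
\[
\mathcal{H}(\vmu,n) = \prod_{k=0}^{\smu + \lmu -1}(n-k)\,\sum_{\vla \in \func{Orb}\,(\vmu)}\prod_{k=1}^{\lla}\left(k + \sum_{i=1}^{k}\lambda_i\right)^{-1},
\]
which is the claimed identity.

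I expect no genuine obstacle here: once (\ref{zwei: H}) is granted, the argument is a one-line factorization resting on the orbit-invariance of $\vert\cdot\vert$ and $\ell(\cdot)$. The only point deserving a word of care is to confirm that (\ref{zwei: H}) may indeed be invoked for every composition appearing in the orbit and not only for the partition $\vmu$ itself; this is precisely why the earlier proposition was phrased for compositions rather than partitions, and it is what makes the present statement an immediate corollary.
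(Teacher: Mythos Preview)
Your argument is correct and is exactly the approach the paper has in mind: the paper gives no separate proof here, treating the identity as an immediate consequence of Definition~\ref{Hcal} together with the closed formula~(\ref{zwei: H}), which was indeed stated for arbitrary compositions. Your observation that $\smu$ and $\lmu$ are orbit invariants is precisely the point that allows the factorization, and nothing more is needed.
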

\section{Main Theorem}
\begin{definition}
Let $g$ be a normalized arithmetic function. Let $\vmu \in \mathcal{P}(m)$. 
The function $\mathcal{G}(\vmu)$ is defined by the product of $g$ evaluated at $(\vmu_k +1)_k$.
\begin{equation}
\mathcal{G}(\vmu) := \prod_{k=1}^{\ell (\vmu)} g(\mu_k + 1).
\end{equation}
\end{definition}

We would like to establish a formula for $A_{n,m}^{g,h}$, separating the contribution of the arithmetic functions
$g$ and $h$. As a first step, we utilize the recursive definition of $P_n^{g,h}(x)$ given in (\ref{recursion}).
For simplification of the notation, we frequently put $A_{n,m}= A_{n,m}^{g,h}$.
Recall that $A_{0,0}=1$, $A_{n,0}=0$ for $n \in \mathbb{N}$ and, 
$A_{n,n}= 1$.
\begin{lemma}\label{firststep}
The coefficients of $A_{n,m}$ of $P_{n}^{g,h}(x)$ satisfy the recursion formula
\begin{equation}
A_{n,m} = \sum_{k=1}^{n-m+1} g(k) \, \frac{H(n-1)}{H(n-k)} \,\, A_{n-k,m-1},  \qquad (1 \leq m \leq n).
\end{equation}
\end{lemma}
\begin{proof} We directly apply the definition of $P_n^{g,h}(x)$ by
the recursion stated in (\ref{recursion}).
Let $1 \leq m \leq n$. Then $\sum_{m=1}^n A_{n,m} \, x^m = H(n) \, P_n(x)$ is equal to
\begin{eqnarray*}
H\left( n-1\right) x\sum _{k=1}^{n}g\left( k\right) P_{n-k}\left( x\right)
& = & x\sum _{k=1}^{n}g\left( k\right) \frac{H\left( n-1\right) }{H\left( n-k\right) }\sum _{m=0}^{n-k}A_{n-k,m}x^{m} \\
&=&\sum _{m=1}^{n}\sum _{k=1}^{n-m+1}g\left( k\right) \frac{H\left( n-1\right) }{H\left( n-k\right) }
A_{n-k,m-1}x^{m}.
\end{eqnarray*}
\end{proof}
The value of $A_{n,m}$ is determined by the $n-m+1$ values
$$A_{n-1,m-1}, A_{n-2,m-1}, \ldots, A_{m-1, m-1}.$$
These are all $m-1$st
coefficients of all polynomials $P_d(x)$ of degree $0 \leq d <n$.
Let $\delta := n-m$. From the computational point of view, to calculate $A_{n,m}$, we 
need $\delta +1 $ many previously given coefficients. As a special case we have
\begin{equation}
A_{n,n}=\frac{H\left( n-1\right) }{H\left( n-1\right) }A_{n-1,n-1}=A_{0,0}=1.
\end{equation}
\subsection{Proof of the Main Theorem}
\begin{proof}
Let $\delta:= n-m >0$.
The proof is given by induction on the degree $n$ of the polynomials $P_n(x)$. Let $n \in \mathbb{N}$.
To formalize the induction, let $S(n)$
be the mathematical statement: 
\begin{equation}\label{statement:n}
S(n): \,\,\, A_{n, n - \delta} \text{ satisfies } (\ref{TOP}) \text{ for all } 0 < \delta \leq n.
\end{equation}
The statement $S(1)$ is true, since $A_{n,0}=0$ for all $n \in \mathbb{N}$ and $A_{n,0}=0$ for $\delta=n=1$, 
evaluating the formula (\ref{TOP}). This follows from $\mathcal{H}(1,1)=0$.

Let us now assume that the statements $S(1), \ldots, S(n-1)$ are true. We show that this implies, that $S(n)$ is also true.
Our starting point is provided by the following formula, stated in Lemma \ref{firststep}:
\begin{equation}
A_{n,n-\delta} = A_{n-1, n -\delta -1} + \sum_{k=2}^{\delta +1} g(k) \, 
\frac{H(n-1)}{H(n-k)} \,\, A_{n-k,n- (\delta+1)},  \qquad (0  \leq \delta < n).
\label{iteration}
\end{equation}
First, we replace $k$ by $k+1$ and
utilize the fact that the last formula (\ref{iteration})
is also true for $n$ substituted by $n-1, n-2, \ldots, \delta +1$:
\[
A_{n,n-\delta} = \sum _{N=\delta }^{n-1} \sum_{k=1}^{\delta } g(k+1) \,
\frac{H(N)}{H(N-k)} \,\, A_{N-k,N- \delta}.
\]
Next we insert the induction hypothesis.
For $A_{n,n- \delta}$ we obtain the expression
\begin{eqnarray*}
& & \sum _{N=\delta}^{n-1}
\sum _{k=1}^{\delta}\sum _{\mu \in \mathscr{P}\left( \delta -k\right) }g\left( k+1\right) 
\prod _{j=1}^{\ell \left( \mu \right) }g\left( \mu _{j}+1\right) h_{k}\left( N\right) \mathcal{H}(\mu ,N-k) \\
&=&
\sum _{k=1}^{\delta}\sum _{\mu \in \mathscr{P}\left( \delta-k\right) }g\left( k+1\right) 
\prod _{j=1}^{\ell \left( \mu \right) }g\left( \mu _{j}+1\right) \sum _{N=\delta}^{n-1}h_{k}\left( N\right) \mathcal{H}(\mu ,N-k).
\end{eqnarray*}
Let $r=\ell \left( \mu \right) $ and $M=\left| \mu \right| +k$, then
we use the one to one correspondence that
$\nu =\left( \nu _{1},\ldots ,\nu _{r},\nu _{r+1}\right) \in \mathbb{N}^{r+1}$ 
if and only if
$\mu = \left( \nu _{1},\ldots ,\nu _{r}\right) \in \mathscr{C} \left( M-\nu _{r+1}\right) $.
Recall that in every orbit of a composition, there is always exactly one partition. 
This allows us finally to obtain for $A_{n,n- \delta}$ the expression
\begin{eqnarray*}
& &\sum _{\nu \in \mathscr{P}\left( \delta \right) }g\left( \nu _{r+1}
+1\right) \prod _{j=1}^{\ell \left( \nu \right) -1}g\left( \nu _{j}+1\right) \sum _{M=\nu _{r+1}}^{n-1}h_{
\nu _{r+1}}\left( M\right) \mathcal{H}((\nu _{1},\ldots ,\nu _{r}), M-
\nu _{r+1}) \\
&=&\sum _{\nu \in \mathscr{P}\left( \delta \right) }
\prod _{j=1}^{\ell \left( \nu \right) }g\left( \nu _{j}+1\right)
\mathcal{H}\left( \nu ,n\right)
\end{eqnarray*}
which shows the claim.
\end{proof}
\section{Applications and examples}
We begin with some definitions and results.
Let us first recall some properties of symmetric polynomials.
Let $R$ be a commutative ring and let $t_1,\ldots,t_n$ be algebraic independent elements of $R$.
Let $x$ be a variable over $R[t_1, \ldots,t_n]$. We expand the polynomials
\[
p(x) = \prod_{k=1}^{n} \left( x - t_k \right)
= x^n - s_1 x^{n-1} + \ldots + (-1)^n s_n,
\]
where each
$s_{k}=s_k (t_1, \ldots , t_n)$ is a polynomial in $t_1, \ldots, t_n$.
For instance,
\begin{equation}
s_1 = \sum_{k=1}^n t_k, \qquad s_2 = \sum_{ 1 \leq k_1 < k_2 \leq n} t_{k_1} t_{k_2}, \qquad \ldots , \qquad s_n = \prod_{k=1}^n t_k.
\end{equation}
The polynomials $s_1, \ldots, s_n$ are the elementary symmetric polynomials of $t_1, \ldots, t_n$.
The symmetric group $S_n$ operates on 
$R[t_1, \ldots,t_n]$ by $$\pi( q(t_1, \ldots, t_n)) := q(t_{\pi ^{-1}(1)}, \ldots, t_{\pi ^{-1}(n)}),
\qquad \text{where }q \in R[t_1, \ldots,t_n], \, \pi \in S_n.$$
A polynomial $q$ is symmetric iff $\pi(q)=q$ for all $\pi \in S_n$.
It is well-known that every symmetric polynomial $q(t_1, \ldots, t_n)$ is a polynomial in
the elementary symmetric polynomials.
We further recall the definitions of unimodal, log-concave,
and ultra-log-concave.
\begin{definition}
Let $a_{0},a_{1},\ldots ,a_n$ be a finite sequence of
non-negative real numbers. This sequence is called:
\begin{itemize}
\item    \emph{unimodal} if
$a_{0} \leq a_{1} \leq \ldots \leq a_{k-1} \leq a_{k} \geq a_{k+1} \geq \ldots \geq  a_{n}$
for some $k$,

\item   \emph{log-concave} if
$a_{j}^2 \geq a_{j-1}a_{j+1}$ for all $j\geq 1$, and

\item   \emph{ultra-log-concave} if the
associated sequence $a_k/ \binom{n}{k}$ is log-concave.
\end{itemize}
\end{definition}
Note that ultra-log-concave implies log-concave and
log-concave implies unimodal. The converses are in
general not true, indicated by the following examples.
The sequence of coefficients of the polynomials:
\begin{itemize}
\item  
$x^{2}+2x+5$ is unimodal but not
log-concave and
\item  $x^{2}+2x+3$ is log-concave but not
ultra-log-concave.
\end{itemize}
Let $P_n^{\tilde{g},1}(x)$ be ultra-log-concave or log-concave, then
$P_n^{g,\func{id}}(x)$ is ultra-log-concave or log-concave, respectively (see \cite{HN20C}).
This follows from relation~(\ref{converting}) and
$$ \frac{1}{m!^{2}}-\frac{1}{\left( m-1\right) !\left( m+1\right) !}=\frac{1}{\left( m-1\right) !m!}\left( \frac{1}{m}-\frac{1}{m+1}\right) >0.$$
Let $\tilde{g}\left( n\right) =g\left( n\right) /n$. Then
$\left( A_{n,m}^{\tilde{g},1}\right) ^{2}\geq A_{n,m-1}^{\tilde{g},1}A_{n,m+1}^{\tilde{g},1}$
implies
\begin{eqnarray*}
\left( A_{n,m}^{g,\func{id}}\right) ^{2}&=&n!^{2}\left( A_{n,m}^{\tilde{g},1}\right) ^{2}\frac{1}{m!^{2}}\\
&\geq &n!^{2}A_{n,m-1}^{\tilde{g},1}A_{n,m+1}^{\tilde{g},1}\frac{1}{\left( m-1\right) !\left( m+1\right) !}=A_{n,m-1}^{g,\func{id}}A_{n,m+1}^{g,\func{id}}.
\end{eqnarray*}
The proof for ultra-log-concave is exactly the same
if the coefficients
$A_{n,k}$ are replaced by
$A_{n,k} / \binom{n}{k}$.
\newpage
\subsection{The case $g=1$}
\subsubsection{Arbitrary $h$} \ \newline
Let $h$ be a normalized non-vanishing arithmetic function. Then $P_n^{1,h}(x)$ is determined by
\begin{eqnarray*}
P_n^{1,h}(x) & = & \frac{1}{H(n)} (x + h(0))\cdots \left( x+ h(n-1)\right) \\
& = & \frac{1}{H(n)}   \sum_{m=1}^n   s_{n-m}(h(0), \ldots, h(n-1)) \,\, x^m.
\end{eqnarray*}
\subsubsection{Let $g=h=1$}
We have $P_n^{1,1}(x) = x (x+1)^{n-1}$ for $n \geq 1$. This
leads to $ 1 \leq m \leq n$:
\begin{equation}
A_{n,m}^{1,1} = \binom{n-1}{m-1} = s_{n-m}(0,1, \ldots, 1).
\end{equation}
Therefore, $P_n^{1,1}(x)$ is ultra-log-concave. 
Combining this formula with Theorem \ref{th1} leads to
\begin{corollary}Let $1 \leq m < n$. Then
\begin{equation}
\sum_{ \vmu \in \mathcal{P}(n-m)} \, \binom{\lmu}{\mathfrak{m} _1 , \ldots , \mathfrak{m} _{n}} \,\, \binom{n - \smu}{\lmu}
=\binom{n-1}{m-1} .
\end{equation}
\end{corollary}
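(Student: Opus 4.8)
The plan is to derive this identity \emph{for free} by computing the single coefficient $A_{n,m}^{1,1}$ in two independent ways and equating the results. Theorem~\ref{th1}, specialized to the constant arithmetic function $g=1$, expresses $A_{n,m}^{1,1}$ as a weighted sum over the partitions of $n-m$; on the other hand the explicit factorization $P_n^{1,1}(x)=x(x+1)^{n-1}$ gives $A_{n,m}^{1,1}$ in closed form. Comparing the two evaluations is precisely the asserted combinatorial identity.

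First I would specialize. For $g=1$ every weight trivializes: $\mathcal{G}(\vmu)=\prod_{k=1}^{\lmu} 1(\mu_k+1)=1$ for each partition $\vmu$, so Theorem~\ref{th1} collapses to
\begin{equation*}
A_{n,m}^{1,1} = \sum_{\vmu \vdash n-m} \binom{\lmu}{\mathfrak{m}_1,\ldots,\mathfrak{m}_m}\,\binom{n-\smu}{\lmu}.
\end{equation*}
Next I would expand $x(x+1)^{n-1}$ by the binomial theorem and read off the coefficient of $x^m$, obtaining $A_{n,m}^{1,1}=\binom{n-1}{m-1}$ for $1\le m\le n$. Setting the two evaluations equal gives the claim.

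The only point requiring attention is the range of the multinomial index. A part of $\vmu \vdash n-m$ can be as large as $n-m$, so the multiplicities $\mathfrak{m}_j$ that actually contribute run up to $n-m$; extending the range further up to $n$, as in the statement, merely appends factors $\mathfrak{m}_j!=0!=1$ for $n-m<j\le n$ and changes nothing. I expect no genuine difficulty here: the corollary is a pure specialization, and its entire content is the bookkeeping just described.

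As an independent confirmation I would also verify the identity directly. Since $\smu=n-m$ forces $n-\smu=m$, and $\binom{\lmu}{\mathfrak{m}_1,\ldots,\mathfrak{m}_n}=|\func{Orb}(\vmu)|$ counts the compositions sharing the part-multiset of $\vmu$, the left-hand sum regroups over all compositions of $n-m$ and becomes $\sum_{k\ge 1}\binom{n-m-1}{k-1}\binom{m}{k}$, using $c_k(n-m)=\binom{n-m-1}{k-1}$. A single application of Vandermonde's identity then yields $\binom{(n-m-1)+m}{m-1}=\binom{n-1}{m-1}$, in agreement with the right-hand side and with the specialization argument above.
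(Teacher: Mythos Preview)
Your proposal is correct and follows exactly the paper's approach: the corollary is obtained by combining the formula $A_{n,m}^{1,1}=\binom{n-1}{m-1}$, read off from $P_n^{1,1}(x)=x(x+1)^{n-1}$, with the specialization of Theorem~\ref{th1} to $g=1$. Your additional direct verification via $\sum_{k\ge 1}\binom{n-m-1}{k-1}\binom{m}{k}=\binom{n-1}{m-1}$ (Vandermonde) is a nice independent check that the paper does not include.
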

\subsection{The case $g=\func{id}$} 
\subsubsection{Let $g= \func{id}$ and $h=\func{id}$}
\ \newline
The explicit form of the coefficients and the ultra-log-concave properties
immediately leads to
\begin{corollary} The polynomials $P_n^{\func{id},\func{id}}(x)$ are ultra-log-concave and the coefficients are equal to the Lah numbers.
These polynomials are associated Laguerre polynomials.
\begin{equation}
\frac{1}{n!} A_{n,m}^{\func{id},\func{id}} = \frac{1}{m!} \binom{n-1}{m-1}. 
\end{equation}
\end{corollary}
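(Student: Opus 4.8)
The plan is to derive every assertion from the conversion formula~(\ref{converting}) applied to $g=\func{id}$. First I would note that for $g=\func{id}$ the twisted function is $\tilde{g}(n)=\func{id}(n)/n=1$, the constant arithmetic function. Since the generating series of $\func{id}$ equals $\sum_{n\geq 1}n\,q^{n}=q/(1-q)^{2}$, which is regular at the origin, $\func{id}$ is of moderate growth and~(\ref{converting}) is available. Feeding in the already established value $A_{n,m}^{1,1}=\binom{n-1}{m-1}$, I would obtain
\begin{equation*}
\frac{A_{n,m}^{\func{id},\func{id}}}{n!}=\frac{A_{n,m}^{\tilde{g},1}}{m!}=\frac{A_{n,m}^{1,1}}{m!}=\frac{1}{m!}\binom{n-1}{m-1},
\end{equation*}
which is exactly the displayed identity. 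Rewriting this as $A_{n,m}^{\func{id},\func{id}}=\frac{n!}{m!}\binom{n-1}{m-1}$ identifies the coefficients with the (unsigned) Lah numbers $L(n,m)$.

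For the ultra-log-concavity I would invoke the transfer principle stated above, namely that ultra-log-concavity of $P_n^{\tilde{g},1}(x)$ forces ultra-log-concavity of $P_n^{g,\func{id}}(x)$. With $g=\func{id}$ this requires only the ultra-log-concavity of $P_n^{1,1}(x)$, which has already been recorded, so the conclusion for $P_n^{\func{id},\func{id}}(x)$ follows immediately.

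To make the Laguerre identification explicit I would use $H(n)=\prod_{k=1}^{n}k=n!$ to write
\begin{equation*}
P_n^{\func{id},\func{id}}(x)=\frac{1}{n!}\sum_{m=1}^{n}A_{n,m}^{\func{id},\func{id}}\,x^{m}=\sum_{m=1}^{n}\frac{1}{m!}\binom{n-1}{m-1}\,x^{m},
\end{equation*}
and compare with the standard expansion $L_n^{(\alpha)}(x)=\sum_{k=0}^{n}\binom{n+\alpha}{n-k}\frac{(-x)^{k}}{k!}$ at $\alpha=-1$. Using $\binom{n-1}{n-k}=\binom{n-1}{k-1}$ (which also kills the $k=0$ term, matching $A_{n,0}^{\func{id},\func{id}}=0$) gives $L_n^{(-1)}(x)=\sum_{m=1}^{n}\frac{(-1)^{m}}{m!}\binom{n-1}{m-1}x^{m}$, whence $P_n^{\func{id},\func{id}}(x)=L_n^{(-1)}(-x)$.

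I do not anticipate a genuine obstacle: once the observation $\tilde{g}=1$ is made, the conversion formula does all the work and the remaining claims are immediate consequences of results already proved. The only point deserving care is the bookkeeping in the last step, namely matching the normalization, fixing the Laguerre parameter $\alpha=-1$, and checking that the vanishing constant term of $L_n^{(-1)}$ is consistent with the polynomials $P_n^{\func{id},\func{id}}$ having no constant term.
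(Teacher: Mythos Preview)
Your proposal is correct and follows exactly the approach the paper intends: the corollary is stated as an immediate consequence of the preceding material, namely the explicit value $A_{n,m}^{1,1}=\binom{n-1}{m-1}$, the ultra-log-concavity of $P_n^{1,1}(x)$, and the transfer via the conversion formula~(\ref{converting}) after observing $\tilde g=1$ when $g=\func{id}$. The paper does not spell out the Laguerre identification with a specific parameter; your explicit verification that $P_n^{\func{id},\func{id}}(x)=L_n^{(-1)}(-x)$ is a welcome bit of additional detail.
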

\subsubsection{Let $g= \func{id}$ and $h$ be arbitrary}
We have $g\left( n+2\right) -2g\left( n+1\right) +g\left( n\right) =0$ for all
$n\geq 1$, see \cite[Example~2.7~(1)]{HNT20}.
From \cite[Theorem~2.1]{HNT20} we obtain
\begin{equation} \label{orthogonal}
\frac{h\left( n\right) }{h\left( n+2\right) }P_{n}^{g,h}\left( x\right) +\left( -2 \,\, 
\frac{h\left( n+1\right) }{h\left( n+2\right) }-\frac{x}{h\left( n+2\right) }\right) 
P_{n+1}^{g,h}\left( x\right) +P_{n+2}^{g,h}\left( x\right) =0.
\end{equation}
We would like to mention
that in the case $g=\func{id}$ and $h=1$, we obtain the Chebyshev polynomials of the second kind.
It is not a coincidence that $P_n^{1, \func{id}}(x)$ and $P_n^{\func{id}, \func{id}}(x)$ both satisfy 
a $2$nd
order linear difference equation ($3$-term linear difference equation), since they are orthogonal polynomials.

\begin{remark} Let $g$ be
fixed. Let the assumptions of Theorem 2.1 \cite{HNT20} be fulfilled. 
Then $\left\{P_n^{g,h}(x)\right\}_n$ satisfies for all non-vanishing normalized arithmetic functions $h$
the same type of reduced recursion formula, with the same of amount of terms (see (\ref{orthogonal}) for $g = \func{id}$).
\end{remark}

\subsection{Log-concavity}
\begin{proposition}\label{log}
Let $g$ and $h$ be normalized arithmetic functions with positive values.
Let 
\begin{equation}
a_{n,m} = \frac{A_{n,m}^{g,h}}{\prod_{k=1}^n h(k)}
\end{equation}
be the $m$th coefficient of the polynomial $P_n^{g,h}(x)$. Then
\begin{equation} \label{log: n-1}
a_{n,n-1}^2 > a_{n,n-2} \,\, a_{n,n}
\end{equation}
for all $n \geq 2$ if $g(2)^2 > g(3)$. There exists $g$, independent of $h$,
such that (\ref{log: n-1}) fails.
\end{proposition}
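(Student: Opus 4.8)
The plan is to compute the three relevant coefficients $a_{n,n-1}$, $a_{n,n-2}$, and $a_{n,n}$ explicitly using the formulas already established, and then to verify the inequality (\ref{log: n-1}) by direct algebraic comparison. Recall that $a_{n,m} = A_{n,m}^{g,h}/\prod_{k=1}^n h(k)$ and that $A_{n,n}^{g,h}=1$, so $a_{n,n} = 1/H(n)$. For the other two coefficients I would invoke the Main Theorem (\ref{TOP}), which gives $A_{n,m}^{g,h} = \sum_{\mu \vdash n-m} \mathcal{G}(\mu)\,\mathcal{H}(\mu,n)$. For $m=n-1$, the only partition of $1$ is $\mu=(1)$, giving $\mathcal{G}((1)) = g(2)$ and $A_{n,n-1}^{g,h} = g(2)\,\mathcal{H}((1),n) = g(2)\,H(1,n)$. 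For $m=n-2$, the partitions of $2$ are $(1,1)$ and $(2)$, yielding $A_{n,n-2}^{g,h} = g(2)^2\,\mathcal{H}((1,1),n) + g(3)\,\mathcal{H}((2),n)$, where $\mathcal{G}((1,1))=g(2)^2$ and $\mathcal{G}((2))=g(3)$.

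Having these, I would divide through by the appropriate powers of $H(n)$ to express everything in terms of the normalized coefficients. Writing $a_{n,n-1} = g(2)\,H(1,n)/H(n)$ and $a_{n,n-2} = \bigl(g(2)^2\,\mathcal{H}((1,1),n) + g(3)\,\mathcal{H}((2),n)\bigr)/H(n)$, the desired inequality $a_{n,n-1}^2 > a_{n,n-2}\,a_{n,n}$ becomes, after clearing the common factor $1/H(n)^2$,
\begin{equation}
g(2)^2\,H(1,n)^2 > g(2)^2\,\mathcal{H}((1,1),n) + g(3)\,\mathcal{H}((2),n).
\end{equation}
The key observation I would aim to exploit is that the $H$-values here depend only on $h$ and $n$, not on $g$, so that the $g$-dependence is fully isolated in the coefficients $g(2)^2$ and $g(3)$. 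I expect that the combinatorial identity $H(1,n)^2 = \mathcal{H}((1,1),n) + \mathcal{H}((2),n)$ holds (or something close to it), reflecting that squaring the single-part sum reproduces the two-part contributions; this would be the technical heart of the argument and I would verify it directly from Definition \ref{def: Hmun}, using that $\mathcal{H}((1,1),n) = 2\,H((1,1),n)$ and $\mathcal{H}((2),n)=H((2),n)$ since $(2)$ is fixed by $S_1$ while $(1,1)$ has orbit size one but arises from the symmetric square.

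Granting such an identity, the inequality reduces to $g(2)^2\,\mathcal{H}((2),n) > g(3)\,\mathcal{H}((2),n)$, i.e.\ $g(2)^2 > g(3)$ after cancelling the positive factor $\mathcal{H}((2),n)$ (positive for $n\geq 3$ since $h$ has positive values; the case $n=2$ must be checked separately, where $\mathcal{H}((2),2)=0$ and the inequality degenerates favorably). This yields the sufficient condition $g(2)^2 > g(3)$ claimed. The main obstacle will be establishing the precise form of the $H$-identity relating $H(1,n)^2$ to the second-order quantities, since the inductive definition of $H(\mu,n)$ makes the squared single-part sum a double sum that must be reorganized into the diagonal ($\mu=(2)$) and off-diagonal ($\mu=(1,1)$) contributions; getting the combinatorial bookkeeping exactly right, including the orbit-size factors from Definition \ref{Hcal}, is where care is needed. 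For the final sentence, that there exists $g$ (independent of $h$) making (\ref{log: n-1}) fail, I would simply exhibit a normalized $g$ with $g(2)^2 < g(3)$, so that the reduced inequality reverses and log-concavity breaks for every admissible $h$.
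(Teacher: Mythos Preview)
Your approach is essentially the same as the paper's: compute $A_{n,n}$, $A_{n,n-1}$, $A_{n,n-2}$ via the Main Theorem, expand the resulting $\mathcal{H}$-quantities as sums over values of $h$, and compare. Two points need correcting, however.

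First, the orbit of $(1,1)$ under $S_2$ has size $1$, not $2$: the transposition fixes $(1,1)$. Hence $\mathcal{H}((1,1),n)=H((1,1),n)$, not $2H((1,1),n)$. Second, and more importantly, the identity you hope for is false. Writing out the definitions gives $H(1,n)=\sum_{k=1}^{n-1}h(k)$, $\mathcal{H}((2),n)=\sum_{k=2}^{n-1}h(k)h(k-1)$, and $\mathcal{H}((1,1),n)=\sum_{k=3}^{n-1}h(k)\sum_{\ell=1}^{k-2}h(\ell)$; thus $\mathcal{H}((1,1),n)+\mathcal{H}((2),n)=\sum_{1\le \ell<k\le n-1}h(k)h(\ell)$, whereas $H(1,n)^2$ is the full double sum. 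So you only get the strict inequality
\[
H(1,n)^2 \;>\; \mathcal{H}((1,1),n)+\mathcal{H}((2),n)\qquad (n\ge 2),
\]
which is precisely what the paper uses: it yields $\Delta(n)\ge (g(2)^2-g(3))\,\mathcal{H}((2),n)$ and hence the first claim under $g(2)^2>g(3)$.

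This same correction, though, undercuts your argument for the final sentence. Because the left side carries a strictly positive surplus, merely taking $g(2)^2<g(3)$ does \emph{not} reverse the inequality; one must take $g(3)$ large enough to overcome that surplus (which is exactly how the paper phrases it). So your plan for the converse needs to be sharpened from ``$g(2)^2<g(3)$'' to ``$g(3)$ sufficiently large''.
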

\begin{proof}
It is sufficient to consider $ \Delta(n):= (\prod_{k=1}^n h(k))^2 \left( (a_{n,n-1}^2 - a_{n,n-2} \, a_{n,n} \right)$.
Then 
\begin{eqnarray*}
\Delta(n) & = & \left( \mathcal{G}(1) \mathcal{H}\left( 1,n\right)  \right)^2 - 
\left[
\mathcal{G}(2) \mathcal{H}\left( 2,n\right) + \mathcal{G}(1,1) \mathcal{H}\left( (1,1), n\right) \right] \\
& = & g(2)^2 \sum_{ k_1,k_2 =1}^{n-1} h(k_1) \, h(k_2) \\
& & {}-g(3) 
\sum_{k=2}^{n-1} h(k)\, h(k-1) 
- g(2)^2 \sum_{k=3}^{n-1} h(k) \sum_{\ell =1}^{k-2} h(\ell )\\
& \geq &
\left( g(2)^2-g(3) \right)
\sum_{k=2}^{n-1} h(k)\, h(k-1) .
\end{eqnarray*}
This proves the first part of the claim in the Proposition.
We also observe that we can chose $g(3)$ sufficiently large that a $n$ exists, such
that $\Delta(n)<0$.
\end{proof}
The Nekrasov--Okounkov polynomials \cite{NO06, HZ20, HN20C} defined by 
\begin{equation} 
\label{no}
Q_n(x): = \sum_{\lambda \vdash n} \,\, \prod_{ h \in \mathcal{H}(\lambda)} \left( 1 + \frac{x}{h^2} \right) = \sum_{m=0}^{n} b_{n,m} \, x^m.
\end{equation}
are log-concave for $n\leq 1500$. Hong and Zhang \cite{HZ20} have proven that for 
$n$ sufficiently large, and $m \gg \sqrt{n} \,\, \log \, n$: $b_{n,m} \geq b_{n, m+1}$.
From Proposition \ref{log} and \cite{Br94} we obtain 
\begin{corollary}
Let $n \geq 2$ and $Q_n(x)$ be the Nekrasov-Okounkov polynomial. Then
\begin{equation}
b_{n,n-1}^2 > b_{n,n-2} \, b_{n,n}.
\end{equation}
\end{corollary}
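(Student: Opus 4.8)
The plan is to identify the Nekrasov--Okounkov polynomials $Q_n(x)$ with a special case of the recursively defined polynomials $P_n^{g,h}(x)$, so that Proposition \ref{log} applies directly. Observe that $Q_n(x)$ is obtained from the Nekrasov--Okounkov formula \eqref{NO} by the substitution $z+1 \mapsto x$, i.e. $Q_n(x) = P_n^{\sigma,\func{id}}(x-1)$. Since $P_n^{\sigma,\func{id}}$ is the D'Arcais polynomial, the pair $(g,h)=(\sigma,\func{id})$ realizes $Q_n$ up to the shift $x \mapsto x-1$. The key point is that $\sigma=\sigma_1$ has positive integer values with $\sigma(2)=3$ and $\sigma(3)=4$, so $g(2)^2 = 9 > 4 = g(3)$, which is exactly the hypothesis under which Proposition \ref{log} guarantees log-concavity of the top three coefficients.

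First I would make the relationship between the $b_{n,m}$ and the $A_{n,m}^{\sigma,\func{id}}$ precise. Writing $a_{n,m} = A_{n,m}^{\sigma,\func{id}}/\prod_{k=1}^n h(k)$ for the coefficients of $P_n^{\sigma,\func{id}}$, one has $\sum_m a_{n,m}\,x^m = P_n^{\sigma,\func{id}}(x)$ and hence $Q_n(x) = \sum_m a_{n,m}(x-1)^m$. The crucial observation, due to Br\"and\'en \cite{Br94}, is that the shift $x \mapsto x-1$ (more precisely, multiplication by a root with a suitable sign condition) preserves log-concavity type inequalities; this is why \cite{Br94} is cited in the statement. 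So I would invoke the relevant result from \cite{Br94} to transfer the inequality $a_{n,n-1}^2 > a_{n,n-2}\,a_{n,n}$ for $P_n^{\sigma,\func{id}}$ to the analogous inequality $b_{n,n-1}^2 > b_{n,n-2}\,b_{n,n}$ for the shifted polynomial $Q_n$.

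The steps in order are then: (i) record $Q_n(x) = P_n^{\sigma,\func{id}}(x-1)$ from \eqref{NO} and \eqref{no}; (ii) verify the hypothesis $g(2)^2 > g(3)$ of Proposition \ref{log} for $g=\sigma$, namely $9 > 4$, and apply the Proposition to obtain $a_{n,n-1}^2 > a_{n,n-2}\,a_{n,n}$ for all $n \geq 2$; (iii) apply the preservation-of-log-concavity result from \cite{Br94} under the linear substitution $x \mapsto x-1$ to conclude the same inequality for the top coefficients $b_{n,m}$ of $Q_n$.

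The main obstacle I anticipate is step (iii): verifying that the specific top-three-coefficient inequality survives the shift $x\mapsto x-1$. A single local log-concavity inequality $a_{n,n-1}^2 > a_{n,n-2}\,a_{n,n}$ at the top of the polynomial is not obviously preserved by an arbitrary affine change of variable, since the new top coefficients $b_{n,n}, b_{n,n-1}, b_{n,n-2}$ are linear combinations of all the $a_{n,m}$. The resolution should come from the fact that the substitution only shifts the argument by a real constant, so the leading coefficients satisfy $b_{n,n}=a_{n,n}$, $b_{n,n-1}=a_{n,n-1}-n\,a_{n,n}$, and $b_{n,n-2}=a_{n,n-2}-(n-1)a_{n,n-1}+\binom{n}{2}a_{n,n}$; the result of \cite{Br94} on operators preserving log-concavity (the operator $x \mapsto x-1$ acting on real-rooted or log-concave polynomials) guarantees these top-end inequalities are preserved, and I would cite it rather than re-derive the combinatorial identity.
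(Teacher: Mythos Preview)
Your overall plan---identify $Q_n$ with a shift of $P_n^{\sigma,\func{id}}$, apply Proposition~\ref{log} using $\sigma(2)^2=9>4=\sigma(3)$, then transfer the inequality through the shift---is exactly the paper's approach. However, you have the shift in the wrong direction, and this is precisely what makes your step~(iii) look hard.

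From the Nekrasov--Okounkov identity one has $\sum_n Q_n(x)\,q^n=\prod_k(1-q^k)^{-(x+1)}$, hence $Q_n(x)=P_n^{\sigma,\func{id}}(x+1)$, not $P_n^{\sigma,\func{id}}(x-1)$. (The sign in equation~\eqref{NO} of the paper is a typo: at $n=1$ the right-hand side there equals $z+2$, while $P_1^{\sigma,\func{id}}(z)=z$; the correct numerator is $z-1$.) You can also check directly that $Q_2(x)=\tfrac{1}{2}x^2+\tfrac{5}{2}x+2=P_2^{\sigma,\func{id}}(x+1)$, whereas $P_2^{\sigma,\func{id}}(x-1)=\tfrac{1}{2}x^2+\tfrac{1}{2}x-1$.

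With the correct sign the obstacle you anticipate disappears entirely. Writing $b_{n,n}=a_{n,n}$, $b_{n,n-1}=a_{n,n-1}+na_{n,n}$, $b_{n,n-2}=a_{n,n-2}+(n-1)a_{n,n-1}+\binom{n}{2}a_{n,n}$, a two-line expansion gives
\[
b_{n,n-1}^2-b_{n,n-2}\,b_{n,n}
=\bigl(a_{n,n-1}^2-a_{n,n-2}\,a_{n,n}\bigr)+(n+1)\,a_{n,n-1}a_{n,n}+\binom{n+1}{2}a_{n,n}^2.
\]
All three terms on the right are strictly positive: the first by Proposition~\ref{log}, the others because $g=\sigma$ and $h=\func{id}$ take positive values so every $a_{n,m}>0$. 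No appeal to a general preservation theorem is needed; the citation \cite{Br94} is to Brenti (not Br\"and\'en) and serves as background on log-concavity rather than as a black-box step. Your concern that a single top inequality need not survive an arbitrary shift is well taken---indeed for the shift $x\mapsto x-1$ the analogous computation produces a term $-(n+1)a_{n,n-1}a_{n,n}$ and the argument would fail---but the actual shift is $+1$.
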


\subsection{Applications with the conversion formula}
It is well known (see for example \cite{Ko04}) that
\begin{equation}
\prod_{n=1}^{\infty} \left(1 - q^n \right)^{-x} = 1 + \sum_{n=1}^{\infty} 
\left( \sum_{m=1}^n 
\frac{1}{m!}\,
\sum_{\substack {k_1, \ldots, k_m \in \mathbb{N} \\ k_1+ \ldots + k_m=n}} \, \prod_{i=1}^{m} \frac{\sigma (k_i)}{k_i} \, x^m \right)
q^n   
\end{equation}
This can be easily generalized to a formula for the coefficients of $P_n^{g, \func{id}}(x)$.
Utilizing the conversion formula (\ref{converting}) we also obtain a formula for the coefficients
of $P_n^{g, 1}(x)$.
\begin{corollary} \label{general}
Let $g$ be a normalized arithmetic function of moderate growth. Then we obtain for
the normalized coefficients of $P_n^{g, \func{id}}(x)$
and $P_n^{g, 1}(x)$ the following formulas:
\begin{eqnarray}
\frac{A_{n,m}^{g,\func{id}}}{n!}& = & \frac{1}{m!}
\sum_{\substack {k_1, \ldots, k_m \in \mathbb{N} \\ k_1+ \ldots + k_m=n}} \, \prod_{i=1}^{m} \frac{g (k_i)}{k_i}, \\
A_{n,m}^{g,1 } & = & 
\sum_{\substack {k_1, \ldots, k_m \in \mathbb{N} \\ k_1+ \ldots + k_m=n}} \, \prod_{i=1}^{m} g (k_i).
\end{eqnarray}
\end{corollary}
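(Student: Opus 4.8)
The plan is to treat the two displayed formulas separately, deriving the first from the known product expansion and then transporting it to the second via the conversion formula \eqref{converting}. First I would establish the identity for $P_n^{g,\func{id}}(x)$. The displayed product expansion for $\prod_{n\geq 1}(1-q^n)^{-x}$ is stated with the arithmetic function $\sigma$, but its derivation does not actually use any property of $\sigma$ beyond that it is a normalized arithmetic function of moderate growth; the same manipulation applies verbatim with $\sigma$ replaced by an arbitrary such $g$. Concretely, I would start from the generating identity $\sum_{n\geq 0} P_n^{g,\func{id}}(x)\,q^n = \exp\!\bigl(x \sum_{k\geq 1} \tfrac{g(k)}{k}\,q^k\bigr)$, which is the natural analogue of the $(g,h)=(\sigma,\func{id})$ case and follows by solving the recursion \eqref{recursion} (taking logarithms converts the convolution recursion into the exponential of a single power series). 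Expanding the exponential as $\sum_{m\geq 0} \tfrac{x^m}{m!}\bigl(\sum_{k\geq 1}\tfrac{g(k)}{k}q^k\bigr)^m$ and extracting the coefficient of $q^n x^m$ produces exactly the claimed multinomial-type sum $\tfrac{1}{m!}\sum_{k_1+\cdots+k_m=n}\prod_{i=1}^m \tfrac{g(k_i)}{k_i}$, which equals $A_{n,m}^{g,\func{id}}/n!$ by the normalization $P_n^{g,\func{id}}(x)=\tfrac{1}{n!}\sum_m A_{n,m}^{g,\func{id}}x^m$ (here $H(n)=n!$ for $h=\func{id}$).

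Next I would deduce the formula for $A_{n,m}^{g,1}$ by invoking the conversion formula \eqref{converting}, namely $A_{n,m}^{g,\func{id}}/n! = A_{n,m}^{\tilde g,1}/m!$ with $\tilde g(k)=g(k)/k$. Rewriting the first formula in terms of $\tilde g$, the summand $\prod_{i=1}^m g(k_i)/k_i$ is precisely $\prod_{i=1}^m \tilde g(k_i)$, so
\begin{equation*}
\frac{A_{n,m}^{\tilde g,1}}{m!} = \frac{A_{n,m}^{g,\func{id}}}{n!} = \frac{1}{m!}\sum_{k_1+\cdots+k_m=n}\prod_{i=1}^m \tilde g(k_i),
\end{equation*}
whence $A_{n,m}^{\tilde g,1} = \sum_{k_1+\cdots+k_m=n}\prod_{i=1}^m \tilde g(k_i)$. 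Applying this with $\tilde g$ renamed as $g$ (equivalently, running the identity for a general normalized $g$) yields the second displayed formula $A_{n,m}^{g,1}=\sum_{k_1+\cdots+k_m=n}\prod_{i=1}^m g(k_i)$.

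The main obstacle I anticipate is justifying the exponential generating identity and the rearrangement of series rigorously. The moderate-growth hypothesis is exactly what guarantees that $\sum_{k\geq 1}\tfrac{g(k)}{k}q^k$ is a regular (convergent) power series near $q=0$, so that the logarithm and exponential manipulations are valid as formal power series and the coefficient extraction is unambiguous; I would state this as the single place where the hypothesis is used. A secondary point to handle cleanly is the index bookkeeping: the inner sum ranges over compositions $k_1+\cdots+k_m=n$ with each $k_i\in\mathbb{N}$, so the factor $1/m!$ accounts for the overcounting of ordered versus unordered tuples, and one should confirm this matches the normalization convention $A_{n,0}=0$, $A_{n,m}=0$ for $m>n$ already recorded in the excerpt. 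Everything else is routine power-series algebra once the exponential formula is in hand.
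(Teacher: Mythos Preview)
Your proposal is correct and follows exactly the same route as the paper: the paper's own justification consists of the single exponential generating identity $\sum_{n\geq 0} P_n^{g,\func{id}}(x)\,q^n = \exp\bigl(x\sum_{n\geq 1} g(n)q^n/n\bigr)$ together with the conversion formula \eqref{converting}. You have supplied more detail on the coefficient extraction and on the renaming step $\tilde g \leadsto g$ than the paper does, but the strategy is identical.
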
 \
We utilized $$\sum_{n=0}^{\infty} P_n^{g, \func{id}}(x) \, q^n = \exp 
\left( x \sum_{n=1}^{\infty} g(n) \, \frac{q^n}{n}\right)$$
and the conversion formula
\begin{equation}
\frac{A_{n,m}^{g,\func{id}}}{n!} = \frac{A_{n,m}^{\tilde{g},1}}{m!}.
\end{equation}
It is a very challenging task to find similar formulas involving general $h$. 
In this paper we provide a formula (Main Theorem) for all pairs $g$ and $h$, which indicates
already the complexity of this task.

\section*{Acknowledgments}
To be entered later.

\end{document}